
\documentclass[12pt]{amsart}

\usepackage{amsmath,amssymb,epic,lscape}

\newtheorem{theorem}{Theorem}[section]
\newtheorem{proposition}[theorem]{Proposition}

\theoremstyle{definition}
\newtheorem{definition}[theorem]{Definition}

\theoremstyle{remark}

\numberwithin{equation}{section}
\providecommand{\bysame}{\leavevmode\hbox to3em{\hrulefill}\thinspace}

\def\DJ{{\hbox{D\kern-.8em\raise.15ex\hbox{--}\kern.35em}}}
\def\DJo{$\;$\kern-.4em
    \hbox{D\kern-.8em\raise.15ex\hbox{--}\kern.35em okovi\'{c}}}

\def\Gr{{ G_{\rm NS} }}
\def\al{{\alpha}}

\def\sig{{\sigma}}

\def\bZ{{\mbox{\bf Z}}}

\def\pE{{\mathcal E}}

\renewcommand{\subjclassname}{\textup{2000} Mathematics Subject
Classification }

\begin{document}

\title[Classification of normal sequences]
{Classification of normal sequences}

\author[D.\v{Z}. \DJ okovi\'{c}]
{Dragomir \v{Z}. \DJ okovi\'{c}}

\address{Department of Pure Mathematics and Institute for Quantum Computing, 
University of Waterloo, Waterloo, Ontario, N2L 3G1, Canada}

\email{djokovic@uwaterloo.ca}


\keywords{Base sequences, Golay sequences, normal sequences, 
nonperiodic autocorrelation functions, canonical form}

\date{}

\begin{abstract}
Base sequences $BS(m,n)$ are quadruples $(A;B;C;D)$ of $\{\pm1\}$-sequences,
with $A$ and $B$ of length $m$ and $C$ and $D$ of length $n$, 
such that the sum of their nonperiodic autocorrelation 
functions is a $\delta$-function. Normal sequences $NS(n)$ are
base sequences $(A;B;C;D)\in BS(n,n)$ such that $A=B$.
We introduce a definition of equivalence for normal sequences 
$NS(n)$, and construct a canonical form.
By using this canonical form, we have enumerated the equivalence 
classes of $NS(n)$ for $n \le 40$. 
\end{abstract}

\maketitle
\subjclassname{ 05B20, 05B30 }
\vskip5mm

\section{Introduction} \label{Uvod}

By a {\em binary} respectively {\em ternary sequence} we mean a 
sequence $A=a_1,a_2,\ldots,a_m$ whose terms belong to $\{\pm1\}$ 
respectively $\{0,\pm1\}$. 
To such a sequence we associate the polynomial 
$A(z)=a_1+a_2z+\cdots+a_mz^{m-1}$.
We refer to the Laurent polynomial $N(A)=A(z)A(z^{-1})$ as 
the {\em norm} of $A$.
{\em Base sequences} $(A;B;C;D)$ are quadruples of binary sequences,
with $A$ and $B$ of length $m$ and $C$ and $D$ of length $n$, 
and such that 
\begin{equation} \label{norm}
N(A)+N(B)+N(C)+N(D)=2(m+n).
\end{equation}
The set of such sequences will be denoted by $BS(m,n)$.

In this paper we consider only the case where $m=n$ or $m=n+1$. 
The base sequences $(A;B;C;D)\in BS(n,n)$ are {\em normal} if $A=B$.
We denote by $NS(n)$ the set of normal sequences of length $n$,
i.e., those contained in $BS(n,n)$. It is well known \cite{Y} 
that for normal sequences $2n$ must be a sum of three squares. 
In particular, $NS(14)$ and $NS(30)$ are empty. Exhaustive 
computer searches have shown that $NS(n)$ are empty also for
$n=6,17,21,22,23,24$ (see \cite{KSY}) and 
$n=27,28,31,33,34,\ldots,39$ (see \cite{DZ2,DZ4}).

The base sequences $(A;B;C;D)\in BS(n+1,n)$ are {\em near-normal} 
if $b_i=(-1)^{i-1}a_i$ for all $i\le n$.
For near-normal sequences $n$ must be even or 1.
We denote by $NN(n)$ the set of near-normal 
sequences in $BS(n+1,n)$.

Normal sequences were introduced by C.H. Yang in \cite{Y}
as a generalization of Golay sequences. Let us recall 
that {\em Golay sequences} $(A;B)$ are pairs of binary sequences
of the same length, $n$, and such that $N(A)+N(B)=2n$.
We denote by $GS(n)$ the set of Golay sequences of length $n$.
It is known that they exist when $n=2^a 10^b 26^c$ where $a,b,c$ are
arbitrary nonnegative integers.
There exist two embeddings $GS(n)\to NS(n)$: the first 
defined by $(A;B)\to(A;A;B;B)$ and the second by 
$(A;B)\to(B;B;A;A)$. We say that these normal sequences (and those
equivalent to them) are of {\em Golay type}. For the definition of 
equivalence of normal sequences see section \ref{Ekv}. 
However, as observed by Yang, there exists normal 
sequences which are not of Golay type.
We refer to them as {\em sporadic} normal sequences. From the 
computational results reported in this paper (see Table 1 below) it appears 
that there may be only finitely many sporadic normal sequences. 
E.g. all 304 equivalence classes in $NS(40)$ are of Golay type.
The smallest length for which the existence question of
normal sequences is still unresolved is $n=41$.

Base sequences, and their special cases such as normal and near-normal
sequences, play an important role in the construction of Ha\-da\-mard
matrices \cite{HCD,SY}. For instance, the discovery of a 
Ha\-da\-mard matrix of order 428 (see \cite{KT}) used a 
$BS(71,36)$, constructed specially for that purpose.

Examples of normal sequences $NS(n)$ have been constructed in
\cite{DZ3,DZ7,HCD,KSY,Y}. For various applications,
it is of interest to classify the normal sequences of
small length. Our main goal is to provide such classification
for $n\le40$. The classification of near-normal sequences $NN(n)$ 
for $n\le40$ and base sequences $BS(n+1,n)$ for $n\le30$ has been carried out in our papers \cite{DZ3,DZ4,DZ6} and \cite{DZ7}, 
respectively.

We give examples of normal sequences of lengths $n=1,\ldots,5$:

\begin{center}
$$ \begin{array}{llll}
\begin{array}{l}
A=+; \\
A=+; \\
C=+; \\
D=+; 
\end{array} &
\begin{array}{l}
A=+,+; \\
A=+,+; \\
C=+,-; \\
D=+,-; 
\end{array} &
\begin{array}{l}
A=+,+,-; \\
A=+,+,-; \\
C=+,+,+; \\
D=+,-,+; 
\end{array} &
\begin{array}{l}
A=+,+,-,+; \\
A=+,+,-,+; \\
C=+,+,+,-; \\
D=+,+,+,-; 
\end{array}
\end{array} $$
\end{center}

$$ \begin{array}{l}
A=+,+,+,-,+; \\
A=+,+,+,-,+; \\
C=+,+,+,-,-; \\
D=+,-,+,+,-; 
\end{array} $$
When displaying a binary sequence, we often write $+$ for $+1$ 
and $-$ for $-1$. We have written the sequence $A$ twice to 
make the quads visible (see the next section).

If $(A;A;C;D)\in NS(n)$ then $(A,+;A,-;C;D)\in BS(n+1,n)$. 
This has been used in our previous papers to view normal sequences 
$NS(n)$ as a subset of $BS(n+1,n)$. For classification purposes it is 
more convenient to use the definition of $NS(n)$ as a subset of
$BS(n,n)$, which is closer to Yang's original definition \cite{Y}.

In section \ref{NorNiz} we recall  the basic properties of base 
sequences $BS(m,n)$. The quad decomposition and our encoding scheme 
for $BS(n+1,n)$ used in our previous papers also works for $NS(n)$, 
but not for arbitrary base sequences in $BS(n,n)$. The quad decomposition
of normal sequences $NS(n)$ is somewhat simpler than that of base 
sequences $BS(n+1,n)$. We warn the reader that the encodings for the 
first two sequences of $(A;A;C;D)\in NS(n)$ and 
$(A,+;A,-;C;D)\in BS(n+1,n)$ are quite different.

In section \ref{Ekv} we introduce the elementary transformations of 
$NS(n)$. We point out that the elementary transformation (E4) is 
quite non-intuitive. It originated in our paper \cite{DZ3} where 
we classified near-normal sequences of small length. Subsequently 
it has been extended and used to classify (see \cite{DZ7}) the base sequences $BS(n+1,n)$ for $n\le30$.  We use these elementary 
transformations to define an equivalence relation and equivalence 
classes in $NS(n)$. We also introduce the canonical form for normal 
sequences, and by using it we were able to compute the 
representatives of the equivalence classes for $n\le40$.

In section \ref{Grupa} we introduce an abstract group, $\Gr$, 
of order $512$ which acts naturally on all sets $NS(n)$. 
Its definition depends on the parity of $n$. The orbits of this 
group are just the equivalence classes of $NS(n)$.

In section \ref{Tablice} we tabulate the results of our
computations giving the list of representatives of the equivalence 
classes of $NS(n)$ for $n\le40$. The representatives are 
written in the encoded form which is explained in the next section.

The summary is given in Table 1. The column ``Equ'' gives the number 
of equivalence classes in $NS(n)$. Note that most of the known 
normal sequences are of Golay type. The column ``Gol'' respectively
``Spo'' gives the number of equivalence classes which are of
Golay type respectively sporadic. (Blank entries are zeros.)

\begin{center}
Table 1: Number of equivalence classes of $NS(n)$
\begin{tabular}{rrrrcrrrr} \\ \hline 
\multicolumn{1}{c}{$n$} & \multicolumn{1}{c}{Equ} 
& \multicolumn{1}{c}{Gol} & \multicolumn{1}{c}{Spo} 
& \multicolumn{1}{c}{\quad\quad\quad} 
& \multicolumn{1}{c}{$n$} & \multicolumn{1}{c}{Equ} 
& \multicolumn{1}{c}{Gol} & \multicolumn{1}{c}{Spo} \\ \hline
1 & 1 & 1 &   && 21 &   &&\\
2 & 1 & 1 &   && 22 &   &&\\
3 & 1 &   & 1 && 23 &   &&\\
4 & 1 & 1 &   && 24 &   &&\\
5 & 1 &   & 1 && 25 & 4 &   & 4 \\
6 &   &   &   && 26 & 2 & 2 &   \\
7 & 4 &   & 4 && 27 &   &&\\
8 & 7 & 6 & 1 && 28 &   &&\\
9 & 3 &   & 3 && 29 & 2 & & 2 \\
10 & 5 & 4 & 1 && 30 &  &&\\
11 & 2 &   & 2 && 31 &  &&\\
12 & 4 &   & 4 && 32 & 516 & 480 & 36 \\
13 & 3 &   & 3 && 33 &  &&\\
14 &   &   &   && 34 &  &&\\
15 & 2 &   & 2 && 35 &  &&\\
16 & 52 & 48 & 4 && 36 & &&\\
17 &   & & && 37 &  &&\\
18 & 1 &  & 1 && 38 & &&\\
19 & 1 &  & 1 && 39 & &&\\
20 & 36 & 34 & 2 && 40 & 304 & 304 & \\
\hline
\end{tabular} \\
\end{center}

\section{Quad decomposition and the encoding scheme} \label{NorNiz}

Let $ A=a_1,a_2,\ldots,a_n $ be an integer sequence of length $n$.
To this sequence we associate the polynomial
$$ A(x)=a_1+a_2x+\cdots+a_nx^{n-1} , $$
viewed as an element of the Laurent polynomial ring 
$\bZ[x,x^{-1}]$. (As usual, $\bZ$ denotes the ring of integers.)
The {\em nonperiodic autocorrelation function} $N_A$ of $A$ is 
defined by:
$$ N_A(i)=\sum_{j\in\bZ} a_ja_{i+j},\quad i\in\bZ, $$
where $a_k=0$ for $k<1$ and for $k>n$. Note that
$N_A(-i)=N_A(i)$ for all $i\in\bZ$ and $N_A(i)=0$ for $i\ge n$.
The {\em norm} of $A$ is the Laurent polynomial 
$N(A)=A(x)A(x^{-1})$. We have
$$ N(A)=\sum_{i\in\bZ} N_A(i) x^i . $$
Hence, if $(A;B;C;D)\in BS(m,n)$ then
\begin{equation} \label{KorNula}
N_A(i)+N_B(i)+N_C(i)+N_D(i)=0, \quad i\ne0.
\end{equation}

The negation, $-A$, of $A$ is the sequence
$$ -A=-a_1,-a_2,\ldots,-a_n. $$
The {\em reversed} sequence $A'$ and the {\em alternated} sequence
$A^*$ of the sequence $A$ are defined by
\begin{eqnarray*}
A' &=& a_n,a_{n-1},\ldots,a_1 \\
A^* &=& a_1,-a_2,a_3,-a_4,\ldots,(-1)^{n-1}a_n.
\end{eqnarray*}
Observe that $N(-A)=N(A')=N(A)$ and $N_{A^*}(i)=(-1)^i N_A(i)$
for all $i\in\bZ$. By $A,B$ we denote the concatenation of the 
sequences $A$ and $B$.

Let $(A;A;C;D) \in NS(n)$. 
For convenience we set $n=2m$ ($n=2m+1$) for $n$ even (odd).
We decompose the pair $(C;D)$ into quads
$$ \left[ \begin{array}{ll} c_i & c_{n+1-i} \\ 
d_i & d_{n+1-i} \end{array} \right],\quad i=1,2,\ldots,m, $$ 
and, if $n$ is odd, the central column
$ \left[ \begin{array}{l} c_{m+1} \\ d_{m+1} \end{array} \right]. $
Similar decomposition is valid for the pair $(A;A)$.

The possibilities for the quads of base sequences $BS(n+1,n)$ are 
described in detail in \cite{DZ7}. In the case of normal sequences
we have 8 possibilities for the quads of $(C;D)$:
\begin{center}
\begin{eqnarray*}
1=\left[ \begin{array}{ll} + & + \\ + & + \end{array} \right],\quad 
2=\left[ \begin{array}{ll} + & + \\ - & - \end{array} \right],\quad 
3=\left[ \begin{array}{ll} - & + \\ - & + \end{array} \right],\quad 
4=\left[ \begin{array}{ll} + & - \\ - & + \end{array} \right], \\
5=\left[ \begin{array}{ll} - & + \\ + & - \end{array} \right],\quad 
6=\left[ \begin{array}{ll} + & - \\ + & - \end{array} \right],\quad 
7=\left[ \begin{array}{ll} - & - \\ + & + \end{array} \right],\quad 
8=\left[ \begin{array}{ll} - & - \\ - & - \end{array} \right],
\end{eqnarray*}
\end{center}
but only 4 possibilities , namely 1,3,6 and 8, for the quads of 
$(A;A)$. In \cite{DZ7} we referred to these eight quads as BS-quads. 
The additional eight Golay quads were also needed for the
classification of base sequences $BS(n+1,n)$. Unless stated
otherwise, the word ``quad'' will refer to BS-quads.

We say that a quad is {\em symmetric} if its two columns are
the same, and otherwise we say that it is {\em skew}. The 
quads $1,2,7,8$ are symmetric and $3,4,5,6$ are skew. We say that
two quads have the {\em same symmetry type} if they are both
symmetric or both skew.

There are 4 possibilities for the central column:
$$
0=\left[ \begin{array}{l} + \\ + \end{array} \right],\quad
1=\left[ \begin{array}{l} + \\ - \end{array} \right],\quad
2=\left[ \begin{array}{l} - \\ + \end{array} \right],\quad
3=\left[ \begin{array}{l} - \\ - \end{array} \right].
$$

We encode the pair $(A;A)$ by the symbol sequence
\begin{equation} \label{simb-p}
p_1p_2 \ldots p_m \quad  \text{respectively} 
\quad p_1p_2 \ldots p_m p_{m+1}
\end{equation}
when $n$ is even respectively odd. Here $p_i$ is the label of the 
$i$th quad for $i\le m$ and $p_{m+1}$ is the label of the central 
column (when $n$ is odd). Similarly, we encode the pair $(C;D)$ 
by the symbol sequence
\begin{equation} \label{simb-q}
q_1q_2 \ldots q_m \quad  \text{respectively} 
\quad q_1q_2 \ldots q_m q_{m+1}.
\end{equation}
For example, the five normal sequences displayed in the introduction
are encoded as $(0;\, 0)$, $(1;\, 6)$, $(60;\, 11)$, $(16;\, 61)$ 
and $(160;\, 640)$, respectively. 

\section{The equivalence relation} \label{Ekv}

We start by defining five types of {\em elementary 
transformations} of normal sequences $(A;A;C;D)\in NS(n)$:

(E1) Negate both sequences $A;A$ or one of $C;D$. 

(E2) Reverse both sequences $A;A$ or one of $C;D$.

(E3) Interchange the sequences $C;D$.

(E4) Replace the pair $(C;D)$ with the pair 
$(\tilde{C};\tilde{D})$ which is defined as follows:
If (\ref{simb-q}) is the encoding of $(C;D)$, then 
the encoding of $(\tilde{C};\tilde{D})$ is 
$\tau(q_1)\tau(q_2)\cdots\tau(q_m)$ or 
$\tau(q_1)\tau(q_2)\cdots\tau(q_m) q_{m+1}$
depending on whether $n$ is even or odd,
where $\tau$ is the transposition $(45)$.
In other words, the encoding of $(\tilde{C};\tilde{D})$ is 
obtained from that of $(C;D)$ by replacing simultaneously each 
quad symbol 4 with the symbol 5, and vice versa. For the proof of 
the equality $N_{\tilde{C}}+N_{\tilde{D}}=N_C+N_D$ see \cite{DZ7}.

(E5) Alternate all four sequences $A;A;C;D$.

We say that two members of $NS(n)$ are {\em equivalent} if 
one can be transformed to the other by applying a finite 
sequence of elementary transformations. 
One can enumerate the equivalence classes by finding suitable
representatives of the classes.
For that purpose we introduce the canonical form.

\begin{definition} \label{KanFor}
Let $S=(A;A;C;D) \in NS(n)$ and let (\ref{simb-p})
respectively (\ref{simb-q}) be the encoding of the pair
$(A;A)$ respectively $(C;D)$. 
We say that $S$ is in the {\em canonical form} if the following 
twelve conditions hold:

(i) For $n$ even $p_1=1$, and for $n>1$ odd $p_1\in\{1,6\}$.

(ii)  The first symmetric quad (if any) of $(A;A)$ is 1.

(iii) The first skew quad (if any) of $(A;A)$ is 6.

(iv) If $n$ is odd and all quads of $(A;A)$ are skew, 
then $p_{m+1}=0$.

(v) If $n$ is odd and $i<m$ is the smallest index such that
the consecutive quads $p_i$ and $p_{i+1}$ have the same 
symmetry type, then $p_{m+1}\in\{1,6\}$. If there is no such 
index and $p_m$ is symmetric, then $p_{m+1}=0$.

(vi) $q_1\in\{1,6\}$ if $n>1$.

(vii) The first symmetric quad (if any) of $(C;D)$ is 1.

(viii) The first skew quad (if any) of $(C;D)$ is 6.

(ix) If $i$ is the least index such that $q_i\in\{2,7\}$ 
then $q_i=2$.

(x) If $i$ is the least index such that 
$q_i\in\{4,5\}$ then $q_i=4$.

(xi) If $n$ is odd and $q_i\ne2$, $\forall i\le m$, 
then $q_{m+1}\ne2$.

(xii) If $n$ is odd and $q_i\ne1$, $i\le m$, then $q_{m+1}=0$.
\end{definition}

We can now prove that each equivalence class has a member 
which is in the canonical form. The uniqueness of this member 
will be proved in the next section.

\begin{proposition} \label{Klase}
Each equivalence class $\pE\subseteq NS(n)$ has at least 
one member having the canonical form.
\end{proposition}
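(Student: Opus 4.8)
The plan is to start from an arbitrary member $S=(A;A;C;D)$ of the equivalence class $\pE$ and to produce a member in canonical form by applying elementary transformations that enforce conditions (i)--(xii) one after another, each step being chosen so as not to undo the conditions already achieved. The preliminary step is to compute, once and for all, the effect of each elementary transformation on the encodings (\ref{simb-p}) and (\ref{simb-q}). Since all four sequences here have length $n$, reversal of $A;A$, of $C$, or of $D$ merely swaps the two entries of each quad in the relevant row or column and \emph{does not permute the quad positions} (this is where the situation differs from $BS(n+1,n)$); consequently negating $A;A$, $C$ or $D$, reversing $A;A$, $C$ or $D$, interchanging $C;D$, and the transformation (E4) each act on the quad-label sequences by a single permutation of $\{1,\dots,8\}$ applied uniformly at every position, while (E5) acts position-dependently, its effect on the quad at position $i$ and on the central column being governed by the parities of $i$, $m$ and $n$. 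One records, e.g., that negating $A;A$ induces $(1\,8)(3\,6)$ on the $(A;A)$-quads, reversing $A;A$ induces $(3\,6)$, interchanging $C;D$ induces $(2\,7)(4\,5)$ on the $(C;D)$-quads, and (E4) induces $(4\,5)$; in particular every transformation other than (E5) leaves one of the two pairs $(A;A)$, $(C;D)$ pointwise fixed.

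I would first normalise the pair $(A;A)$, whose quads lie in $\{1,3,6,8\}$ ($1,8$ symmetric, $3,6$ skew). Using negation of $A;A$ and, when $p_1$ is skew, a single application of (E5) (whose action at the leading position can interchange a symmetric with a skew label), one brings $p_1$ to $1$ for $n$ even and to a member of $\{1,6\}$ for $n>1$ odd, which is (i). Since negation of $A;A$ restricts to $(1\,8)$ on the symmetric labels and to $(3\,6)$ on the skew labels, while reversal of $A;A$ restricts to the identity on $\{1,8\}$ and to $(3\,6)$ on $\{3,6\}$, the composite ``negate then reverse $A;A$'' realises $(1\,8)$ on symmetric labels and fixes every skew label; using this composite and a plain reversal one normalises the first symmetric quad of $(A;A)$ to $1$ and the first skew quad to $6$ without disturbing (i), giving (ii) and (iii). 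For $n$ odd the central column of $(A;A)$ equals $(a_{m+1};a_{m+1})$ and so carries the label $0$ or $3$; its label can be toggled, independently of the leading quads in precisely the situations singled out by (iv) and (v), by the position-dependent part of (E5) together with a negation, which secures (iv) and (v).

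Next, with $(A;A)$ in canonical form, I would operate only with the transformations that fix $(A;A)$ pointwise --- negation and reversal of $C$ and of $D$, interchange of $C;D$, and (E4) --- so that none of (i)--(v) is endangered; since (E5) is now excluded one must check, and the canonical form has been designed so that this holds, that (vi)--(xii) are reachable with these alone. These transformations preserve the symmetry type of each $(C;D)$-quad and act on the symmetric labels $\{1,2,7,8\}$ (respectively the skew labels $\{3,4,5,6\}$) so that any such label can be moved to any other. A structured greedy normalisation --- normalise the leading quad, then the first quad of each symmetry type, and so on --- then produces $q_1\in\{1,6\}$ as in (vi), makes the first symmetric quad of $(C;D)$ equal to $1$ and the first skew quad equal to $6$ as in (vii), (viii), uses the interchange $(2\,7)(4\,5)$ to obtain (ix) and repairs the $\{4,5\}$-pattern with the transposition $(4\,5)$ from (E4) to obtain (x), always leaving the earlier choices fixed. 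For $n$ odd the central column $q_{m+1}\in\{0,1,2,3\}$ of $(C;D)$ is adjusted by negation or reversal of $C$ or $D$; in the cases distinguished by (xi) and (xii) these modify $q_{m+1}$ without touching $q_1,\dots,q_m$, so (xi) and (xii) follow and (vi)--(x) survive.

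The main obstacle is not any individual step but the coherence of the whole schedule: one must fix an order of operations and prove that enforcing each condition leaves intact all those enforced before it. The genuinely delicate points are, for odd $n$, the interaction between the leading-quad conditions (i)--(iii) and the central-column conditions (iv), (v), where the position-dependent behaviour of (E5) and its dependence on the parities of $m$ and $n$ must be tracked carefully, and the verification that, once $(A;A)$ has been fixed, conditions (vi)--(xii) are attainable without re-using (E5). Beyond these points the argument is a finite, routine case analysis over the eight quad labels and the four central-column labels.
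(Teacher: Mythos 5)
Your overall strategy is the same as the paper's: tabulate the permutation each elementary transformation induces on the quad labels (your computations $(1\,8)(3\,6)$, $(3\,6)$, $(2\,7)(4\,5)$, $(4\,5)$ are all correct), then enforce (i)--(xii) greedily while checking that earlier conditions survive. The gap is that the one genuinely hard condition, (v), is not actually handled --- and is misidentified. Despite the typo ``$p_{m+1}\in\{1,6\}$'' in Definition~\ref{KanFor} (the paper's own proofs read it as $p_{i+1}\in\{1,6\}$; note the central column of $(A;A)$ can only carry the labels $0$ or $3$, so the literal reading is unachievable), the first clause of (v) constrains the \emph{interior quad} $p_{i+1}$ at the first index $i$ where consecutive quads share a symmetry type: it must be $1$ or $6$ rather than $3$ or $8$. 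This is not a central-column toggle. Correcting a single interior quad without touching $p_1,\dots,p_i$ requires the only position-dependent tool available, (E5), which for odd $n$ negates every even-position quad of $(A;A)$. The paper's argument exploits the minimality of $i$: the symmetry types strictly alternate up to position $i$, so the type of $p_1$ determines the parity of $i$; one then applies (E2) followed by (E5), verifies that the quads $p_j$, $j\le i$, are restored while $p_{i+1}$ is moved into $\{1,6\}$, and checks that among (i)--(iv) only (iii) can be spoiled (and only when $i=1$, repaired by another (E2)). Your sentence ``its label can be toggled \dots by the position-dependent part of (E5) together with a negation'' contains none of this parity bookkeeping, which is the substance of the proof.

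A secondary error: conditions (xi) and (xii) are not reachable by ``negation or reversal of $C$ or $D$'' as you claim. Negation of $C$ acts on quads as $(1\,7)(2\,8)(3\,5)(4\,6)$ and reversal as $(3\,4)(5\,6)$, so each disturbs $q_1,\dots,q_m$. For (xii) one needs the composite $C\mapsto -C'$ (and/or $D\mapsto -D'$), which induces $(1\,7)(2\,8)$ on quads and hence fixes every quad once (vii) guarantees there are no symmetric quads under the hypothesis of (xii), while acting transitively on the central column. For (xi) the paper instead interchanges $C$ and $D$ and then applies (E4): the net effect on quads is $(2\,7)$, which is trivial because (ix) plus the hypothesis of (xi) rule out quads in $\{2,7\}$, while the central column undergoes $(1\,2)$. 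These are small repairs, but as written your schedule for (xi)--(xii) would break (vii)--(viii). In short: right framework, but the case analysis that constitutes the proof --- above all for (v) --- is missing.
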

\begin{proof}
Let $S=(A;A;C;D)\in\pE$ be arbitrary and let (\ref{simb-p}) 
respectively (\ref{simb-q}) be the encoding of $(A;A)$ 
respectively $(C;D)$. By applying the elementary transformations 
(E1), we can assume that $a_1=c_1=d_1=+1$. If $n=1$, $S$ is in the 
canonical form. So, let $n>1$ from now on. Note that now the first quads, $p_1$ and $q_1$, necessarily belong to $\{1,6\}$ and that 
$p_1\ne q_1$ by (\ref{KorNula}). In the case when $n$ is even 
and $p_1=6$ we apply the elementary transformation (E5). Note 
that (E5) preserves the quads $p_1$ and $q_1$. Thus the 
conditions (i) and (vi) for the canonical form are satisfied. 

The conditions (ii),(iii) and (iv) are pairwise disjoint, and so 
at most one of them may be violated. To satisfy (ii), it suffices 
(if necessary) to apply to the pair $(A;A)$ the transformation (E2). 
To satisfy (iii) or (iv), it suffices (if necessary) to apply to 
the pair $(A;A)$ the transformations (E1) and (E2).

For (v), assume that $p_i$ and $p_{i+1}$ have the same symmetry
type and that $i$ is the smallest such index. Also assume that 
$p_{i+1}\notin\{1,6\}$, i.e., $p_{i+1}\in\{3,8\}$. 

We first consider the case where $p_1=1$ and $p_i$ and $p_{i+1}$ 
are symmetric. By our assumption we have $p_{i+1}=8$ and, by 
the minimality of $i$, $i$ must be odd.
We first apply (E2) to the pair $(A;A)$ and then apply (E5).
The quads $p_j$ for $j\le i$ remain unchanged. On the other hand
(E2) fixes $p_{i+1}$ because it is symmetric, while (E5) replaces  
$p_{i+1}=8$ with 1 because $i+1$ is even. We have to make sure
that previously established conditions are not spoiled. 
Only condition (iii) may be affected. If so, we must have
$i=1$ and we simply apply (E2) again.

Next we consider the case where again $p_1=1$ while $p_i$ and
$p_{i+1}$ are now skew. Thus $p_{i+1}=3$ and $i$ is even.
We again apply (E2) to the pair $(A;A)$ and then apply (E5).
The quads $p_j$ for $j\le i$ again remain unchanged. On the 
other hand (E2) replaces $p_{i+1}=3$ with 6, while (E5) fixes 
it because $i+1$ is odd. Note that in this case none of the
conditions (i-iv) and (vi) will be spoiled.

The remaining two cases (where $p_1=6$) can be treated in a 
similar fashion. Now assume that any two consecutive quads
$p_i,p_{i+1}$ have different symmetry types and that
the last quad, $p_m$, is symmetric. Assume also that 
$p_{m+1}\ne0$, i.e., $p_{m+1}=3$. If $p_1=1$ then $m$ is
odd and we just apply (E5). Otherwise $p_1=6$ and $m$ is
even and we apply the elementary transformations (E1) and (E2) 
to the pair $(A;A)$ and then apply (E5).
After this change the conditions (i-vi) will be satisfied.

To satisfy (vii), in view of (vi) we may assume that $q_1=6$. 
If the first symmetric quad in $(C;D)$ is 2 respectively 7,
we reverse and negate $C$ respectively $D$. If it is 8, we reverse 
and negate both $C$ and $D$. Now the first symmetric quad will be 1.

To satisfy (viii), (if necessary) reverse $C$ or $D$, or both. 
To satisfy (ix), (if necessary) interchange $C$ and $D$. To satisfy 
(x), (if necessary) apply the elementary transformation (E4). 
Note that in this process we do not violate the previously established properties. 

To satisfy (xi), (if necessary) switch $C$ and $D$ and apply
(E4) to preserve (x). To satisfy (xii), (if necessary) replace 
$C$ with $-C'$ or $D$ with $-D'$, or both.

Hence $S$ is now in the canonical form.

\end{proof}

We end this section by a remark on Golay type normal sequences.
Let $(A;B)\in GS(n)$, with $n=2m>2$. While the Golay sequences
$(A;B)$ and $(B;A)$ are always considered as equivalent
(see \cite{DZ1}) the normal sequences $(A;A;B;B)$ and 
$(B;B;A;A)$ may be non-equivalent. It is easy to show that
in fact these two normal sequences are equivalent if and only 
if the binary sequences $A$ and $B^*$ are equivalent, i.e.,
if and only if $B^*\in\{A;-A;A';-A'\}$.

The equivalence classes of Golay sequences of length $\le 40$ 
have been enumerated in \cite{DZ1}. This was accomplished by
defining the canonical form and listing the canonical 
representatives of the equivalence classes. These representatives 
are written there in encoded form as 
$\delta_1\delta_2\cdots\delta_m$ obtained by decomposing
$(A;B)$ into $m$ quads. These are Golay quads and should not 
be confused with the BS-quads defined in section \ref{NorNiz}.
If $(A;B)\in GS(n)$ is one of the representatives, it is 
obvious that $B^*\ne -A$ and $B^*\ne -A'$, and it is easy to 
see that also $B^*\ne A$. Thus if $B^*$ is equivalent to $A$
we must have $B^*=A'$. Finally, one can show that the
equality $B^*=A'$ holds if and only if 
$\delta_i \equiv i \pmod{2}$ for each index $i$. For another 
meaning of the latter condition see 
\cite[Proposition 5.1]{DZ1}. Thus an equivalence class of
Golay sequences $GS(n)$ with canonical representative $(A;B)$ 
provides either one or two equivalence classes of $NS(n)$.
The former case occurs if and only if 
$\delta_i \equiv i \pmod{2}$ for each index $i$. 

By using this criterion it is straightforward to list the
equivalence classes of $NS(n)$ of Golay type for 
$n\le40$. For instance if $n=8$ there are five equivalence 
classes of Golay sequences. Their representatives are
(see \cite{DZ1}) 3218, 3236, 3254, 3272 and 3315. Only the
last representative violates the above condition. Hence
we have exactly $4+2=6$ equivalence classes of Golay type
in $NS(8)$.

\section{The symmetry group of $NS(n)$} \label{Grupa}

We shall construct a group $\Gr$ of order $512$ which acts on $NS(n)$. 
Our (redundant) generating set for $\Gr$ will consist of 9 involutions. 
Each of these generators is an elementary transformation, and we use 
this information to construct $\Gr$, i.e., to impose the defining 
relations. We denote by $S=(A;A;C;D)$ an aritrary member of $NS(n)$.

To construct $\Gr$, we start with an elementary abelian group $E$ 
of order $64$ with generators $\nu,\rho$, and $\nu_i,\rho_i$, $i\in\{3,4\}$.
It acts on $NS(n)$ as follows:
\begin{eqnarray*}
&& \nu S=(-A;-A;C;D),\quad \rho S=(A';A';C;D), \\
&& \nu_3S=(A;A;-C;D),\quad \rho_3S=(A;A;C';D), \\
&& \nu_4S=(A;A;C;-D),\quad \rho_4S=(A;A;C;D').
\end{eqnarray*}

Next we introduce the involutory generator $\sig$. 
We declare that $\sig$ commutes with $\nu$ and $\rho$, and that
$\sig\nu_3=\nu_4\sig$ and $\sig\rho_3=\rho_4\sig$.
The group $H=\langle E,\sig \rangle$ is the direct product of two groups:
$H_1= \langle \nu,\rho \rangle$ of order 4
and $H_2=\langle \nu_3,\rho_3,\sig \rangle$ of order 32.
The action of $E$ on $NS(n)$ extends to $H$ by defining
$\sig S=(A;A;D;C).$

We add a new generator $\theta$ which commutes elementwise 
with $H_1$, commutes with $\nu_3\rho_3,\nu_4\rho_4$ and $\sig$, 
and satisfies
$\theta\rho_3=\rho_4\theta$. Let us denote this enlarged group 
by $\tilde{H}$. It has the direct product decomposition 
$$ \tilde{H}=\langle H,\theta \rangle = H_1 \times \tilde{H}_2,$$
where the second factor is itself direct product of two 
copies of the dihedral group $D_8$ of order 8:
$$ \tilde{H}_2=\langle \rho_3,\rho_4,\theta \rangle \times
\langle \nu_3\rho_3,\nu_4\rho_4,\theta\sig \rangle. $$
The action of $H$ on $NS(n)$ extends to $\tilde{H}$ by 
letting $\theta$ act as the elementary transformation (E5).

Finally, we define $\Gr$ as the semidirect product of 
$\tilde{H}$ and the group of order 2 with generator $\al$. 
By definition, $\al$ commutes with $\nu,\nu_3,\nu_4$ and satisfies:
\begin{eqnarray*}
&& \al\rho\al=\rho(\nu\sig_1)^{n-1}; \\
&& \al\rho_j\al=\rho_j\nu_j^{n-1},\ j=3,4; \\
&& \al\theta\al=\theta\sig^{n-1}.
\end{eqnarray*}
The action of $ \tilde{H}$ on $NS(n)$ extends to $\Gr$ by 
letting $\al$ act as the elementary transformation (E5), i.e., 
we have $\al S=(A^*;B^*;C^*;D^*).$ 

We point out that the definition of the subgroup $\tilde{H}$ 
is independent of $n$ and its action on $NS(n)$ has a quad-wise 
character. By this we mean that the value of a particular 
quad, say $p_i$, of $S\in NS(n)$ and $h\in\tilde{H}$ determine 
uniquely the quad $p_i$ of $hS$. In other words
$\tilde{H}$ acts on the quads and the
set of central columns such that the encoding of $hS$ is
given by the symbol sequences
$$ h(p_1)h(p_2)\ldots  \quad \text{and} \quad
h(q_1)h(q_2)\ldots \, .$$
On the other hand the definition of the full group $\Gr$ depends 
on the parity of $n$, and only for $n$ odd it has the
quad-wise character. 

An important feature of the quad-action of $\tilde{H}$ 
is that it preserves the symmetry type of the quads. If $n$ 
is odd, this is also true for $\Gr$.

The following proposition follows immediately from the construction 
of $\Gr$ and the description of its action on $NS(n)$.

\begin{proposition} \label{Orbite}
The orbits of $\Gr$ in $NS(n)$ are the same as the
equivalence classes.
\end{proposition}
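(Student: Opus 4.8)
The plan is to show that each of the nine involutory generators of $\Gr$ acts on $NS(n)$ exactly as one of the elementary transformations (E1)--(E5), and conversely that each elementary transformation is realized by some element of $\Gr$. Granting both directions, the subgroup of the full symmetric group on the set $NS(n)$ generated by the images of the nine generators coincides with the subgroup generated by the elementary transformations; hence the orbits of $\Gr$ are precisely the equivalence classes, since by definition two normal sequences are equivalent iff one is carried to the other by a finite product of elementary transformations.

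First I would go through the generators one by one, matching them to the list of elementary transformations. The generators $\nu,\rho$ and $\sigma$ are visibly instances of (E1) (negate $A$), (E2) (reverse $A$) and (E3) (interchange $C$ and $D$); the generators $\nu_3,\nu_4$ are (E1) applied to $C$ and to $D$ respectively; $\rho_3,\rho_4$ are (E2) applied to $C$ and to $D$; the generator $\theta$ is declared in the construction to act as (E5); and $\alpha$ is also declared to act as (E5) --- wait, rather, one of $\theta,\alpha$ must correspond to (E4), the non-intuitive transposition of the quad symbols $4$ and $5$. Here I would be careful: the text says $\theta$ acts as (E5) and $\alpha$ acts as $S\mapsto(A^*;B^*;C^*;D^*)$, which is literally the alternation (E5); so the transformation (E4) must arise as a composite, and indeed (E4) is exactly the conjugate by alternation of the $C\leftrightarrow D$ swap restricted to the relevant quads, or more precisely (E4) is obtainable from the generators together with the defining relation $\alpha\theta\alpha=\theta\sigma^{n-1}$. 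So the precise bookkeeping I would do is: verify that the nine generators together generate (as transformations of $NS(n)$) a group containing each of (E1)--(E5), using the quad-wise description of $\tilde H$ and the explicit formula for $\alpha$.

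For the converse direction I would check that each elementary transformation (E1)--(E5) lies in the image of $\Gr$: (E1) is $\nu$ or $\nu_3$ or $\nu_4$; (E2) is $\rho$ or $\rho_3$ or $\rho_4$; (E3) is $\sigma$; (E5) is $\theta$ (or $\alpha$); and (E4) --- the only subtle one --- must be exhibited as a specific word in the generators, using that the quad-action of $\tilde H$ on the eight BS-quads of $(C;D)$ permutes them and that the transposition $(45)$ of quad labels is realized either directly by some generator's quad-action or as a product. This last point is where I expect the main obstacle: one has to confirm that $(45)$, and no unwanted extra permutation of the other quads, is actually produced, which amounts to inspecting how $\theta,\sigma,\rho_3,\rho_4$ act quad-by-quad on the list $1,\dots,8$ and checking that some composite fixes $1,2,3,6,7,8$ while swapping $4$ and $5$. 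Once that identification is made, the equality of the generated groups --- and hence of orbits and equivalence classes --- follows immediately, as the proposition asserts.
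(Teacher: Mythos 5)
Your proposal is correct and is essentially the paper's argument: the paper offers no proof beyond the assertion that the proposition ``follows immediately from the construction of $\Gr$ and the description of its action on $NS(n)$,'' and your generator-by-generator matching of the nine involutions with the elementary transformations (E1)--(E5), in both directions, is exactly the verification that assertion compresses. You are also right to be suspicious about $\theta$: the paper's statement that $\theta$ acts as (E5) is a typo for (E4) --- the relations $\theta\rho_3=\rho_4\theta$ and $\theta\sig=\sig\theta$ are consistent with $\theta$ acting on the quad labels as the transposition $(45)$ once one checks that $\rho_3$ acts as $(34)(56)$, $\rho_4$ as $(35)(46)$ and $\sig$ as $(27)(45)$ --- so (E4) is realized directly by a generator and no composite word is needed.
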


The main tool that we use to enumerate the equivalence classes 
of $NS(n)$ is the following theorem.

\begin{theorem} \label{Glavna}
For each equivalence class $\pE\subseteq NS(n)$ there is a 
unique $S=(A;A;C;D)\in\pE$ having the canonical form.
\end{theorem}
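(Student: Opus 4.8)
The plan is to establish existence and uniqueness separately. Existence is already provided by Proposition~\ref{Klase}, so the real content is uniqueness: if $S$ and $S'$ are equivalent members of $NS(n)$, both in canonical form, then $S=S'$. By Proposition~\ref{Orbite} we may replace ``equivalent'' by ``in the same $\Gr$-orbit'', so we must show that the only element of $\Gr$ that carries one canonical-form sequence to another canonical-form sequence (in its orbit) is the identity. Equivalently, I would show that the canonical conditions (i)--(xii) pin down a unique orbit representative by a step-by-step argument that tracks which group elements are still ``available'' after each condition has been imposed.

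First I would exploit the structure of $\Gr$ as described in section~\ref{Grupa}. For $n$ odd the whole group acts quad-wise and preserves symmetry type; for $n$ even the subgroup $\tilde H$ already does. So the strategy is: peel off the action on the $(A;A)$-part first, using conditions (i)--(v), then the action on the $(C;D)$-part, using conditions (vi)--(xii). Concretely, suppose $g\in\Gr$ fixes the orbit and $gS$ is again in canonical form where $S$ is. Looking at the first quad $p_1$ of $(A;A)$: condition (i) forces $p_1$ (and $p_1$ of $gS$) into a one- or two-element set, and since $p_1\ne q_1$ by \eqref{KorNula} and the quad-action preserves symmetry type, the possibilities for the part of $g$ acting on the $A$-quads are already heavily constrained. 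I would then run through (ii),(iii) — the first symmetric quad must be $1$ and the first skew quad must be $6$ — to kill the generators $\nu,\rho$ acting on $(A;A)$ (together with whatever part of $\theta,\al$ touches the $A$-quads), because any nontrivial relabelling of the symmetric quads $\{1,8\}$ or the skew quads $\{3,6\}$ would be detected by the ``first occurrence'' conditions. Conditions (iv),(v) handle the central column for $n$ odd and similarly remove the remaining freedom there. The upshot of this first half is that $g$ must act trivially on the $(A;A)$-part, i.e.\ $g$ lies in the subgroup $\langle \nu_3,\rho_3,\nu_4,\rho_4,\sig\rangle$ together with possibly some residual diagonal elements.

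For the second half I would analyse how that residual subgroup acts on the $(C;D)$-quads. Condition (vi) fixes $q_1\in\{1,6\}$ and, combined with $p_1\ne q_1$, determines $q_1$ outright; condition (vii) (first symmetric quad is $1$) and (viii) (first skew quad is $6$) then kill the sign/reversal generators $\nu_3,\rho_3,\nu_4,\rho_4$ acting on quads of a given symmetry class, exactly as in the first half. Conditions (ix) and (x) are what remove the swap $\sig$ (interchange $C$ and $D$) and the transformation (E4) respectively: (ix) forces the first quad lying in the $\sig$-orbit pair $\{2,7\}$ to be $2$, so any $g$ with a nontrivial $\sig$-component would be caught, and (x) forces the first quad in the (E4)-orbit pair $\{4,5\}$ to be $4$, catching any nontrivial $\al$- or $\theta\sig$-component that realises (E4). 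Finally (xi) and (xii) dispose of the remaining freedom in the central column for $n$ odd — (xi) removes a residual ``switch-$C$-$D$-plus-(E4)'' that would otherwise toggle a central $2$, and (xii) removes the last sign freedom on the central column by fixing it to be $0$ once $1$ does not occur among the $q_i$. Assembling these: every generator of $\Gr$ has been shown to act nontrivially on some canonical invariant unless the corresponding component of $g$ is trivial, so $g=1$ and hence $S'=gS=S$.

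I expect the main obstacle to be bookkeeping the interactions between the ``diagonal'' generators $\theta$ and $\al$ and the factored-out subgroups: because $\al$ acts differently according to the parity of $n$ (and $\theta=$ (E5) on its own), one has to be careful that imposing a condition on the $(A;A)$-part does not accidentally re-open freedom in the $(C;D)$-part through $\theta$ or $\al$, and conversely. In the even case, where $\al$ does not act quad-wise, the cleanest route is probably to note that Proposition~\ref{Klase}'s proof already shows we may normalise so that $a_1=c_1=d_1=+1$, which together with condition (i) pins down where $\al$ (or $\theta$) has been used, so that the remaining analysis is genuinely quad-wise. The delicate points are precisely the ``if there is no such index'' clauses in (v) and the conditional clauses in (xi),(xii): one must verify that in each such degenerate configuration there is still a unique forced choice, rather than an unbroken symmetry. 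I would handle these by a short case check on the pattern of symmetry types of the quads, exactly mirroring the case analysis in the proof of Proposition~\ref{Klase} but run in reverse to confirm no two distinct canonical forms can share an orbit.
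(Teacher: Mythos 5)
Your overall strategy is the same as the paper's: existence from Proposition~\ref{Klase}, reduction to the $\Gr$-action via Proposition~\ref{Orbite}, a decomposition $g=\al^s h_1h_2$ with $h_1\in H_1$, $h_2\in\tilde H_2$, and then a two-stage argument that uses conditions (i)--(v) to settle the $(A;A)$-part and (vi)--(xii) to settle the $(C;D)$-part, exploiting the quad-wise action and preservation of symmetry type. However, your concluding inference is wrong as stated, and it is wrong at exactly the point you yourself flag as delicate. You claim that ``every generator of $\Gr$ has been shown to act nontrivially on some canonical invariant unless the corresponding component of $g$ is trivial, so $g=1$.'' This is false: the stabilizer of a canonical-form sequence is typically nontrivial. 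If all quads $p_i$ are symmetric, $\rho$ acts trivially on $(A;A)$ and no condition can detect it; if no $q_i$ lies in $\{4,5\}$, the (E4)-generator acts trivially on $(C;D)$; and in the case where the symmetry types of the $p_i$ strictly alternate, the paper explicitly exhibits a surviving element with $s=1$ and $h_1=\rho$ and must then \emph{verify} that this element fixes the sequence (using the second clause of (v) to control the central column). So the correct target is $gS=S$, branch by branch, not $g=1$; the paper's proof is structured around concluding $p_i^{(1)}=p_i^{(2)}$ for all $i$ in each case, sometimes with a nontrivial $g$ remaining.

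The second issue is that the proposal defers essentially all of the actual content. The paper's uniqueness proof is a long case analysis keyed to the pattern of symmetry types of the quads $p_i$ (whether some consecutive pair shares a symmetry type, the parity of the first such index $i$, whether $p_1=1$ or $6$, and the degenerate alternating case), each branch invoking a specific clause of (i)--(v) together with the parity-dependent action of $\al$ on the $i$-th quad. Your sketch names the right conditions but does not run any of these branches, and in particular does not address the interaction you would actually have to control: that $\al$ fixes $p_{i}$ for one parity of $i$ and negates it for the other, which is precisely what makes claims (a)--(c) and the determination of $s$ work. (The paper also does not prove the $(C;D)$ half here at all; it cites \cite{DZ3}.) As written, the proposal is a correct plan matching the paper's route, but with a flawed final step and without the case analysis that constitutes the proof.
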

\begin{proof}
In view of Proposition \ref{Klase}, we just have to prove the 
uniqueness assertion. Let
$$
S^{(k)}=(A^{(k)};A^{(k)};C^{(k)};D^{(k)})\in\pE,\quad (k=1,2)
$$
be in the canonical form. We have to prove that in fact 
$S^{(1)}=S^{(2)}$.

By Proposition \ref{Orbite}, we have $gS^{(1)}=S^{(2)}$ for some 
$g\in\Gr$. We can write $g$ as $g=\al^s h$ where $s\in\{0,1\}$ and 
$h=h_1h_2$ with $h_1\in H_1$ and $h_2\in\tilde{H}_2$. 
Let $p_1^{(k)}p_2^{(k)}\ldots $ be 
the encoding of the pair $(A^{(k)};A^{(k)})$ and 
$q_1^{(k)}q_2^{(k)}\ldots$ the encoding of the pair 
$(C^{(k)};D^{(k)})$. The symbols (i-xii) will refer to the 
corresponding conditions of Definition \ref{KanFor}.

We prove first preliminary claims (a-c). 

(a): $p_1^{(1)}=p_1^{(2)}$ and, consequently, $q_1^{(1)}=q_1^{(2)}$. 

For $n$ even this follows from (i). Let $n$ be odd. 
When we apply the generator $\al$ to any $S\in NS(n)$, we 
do not change the first quad of $(A;A)$. It follows that the quads 
$p_1^{(1)}$ and 
$p_1^{(2)}=g\left(p_1^{(1)}\right)=h_1\left(p_1^{(1)}\right)$
have the same symmetry type. The claim now follows from (i).

Clearly, we are done with the case $n=2$.

If $n=3$ it is easy to see that we must have $p_1^{(1)}=p_1^{(2)}=6$ 
and $q_1^{(1)}=q_1^{(2)}=1$. By (iv), for the central column symbols, 
we have $p_2^{(1)}=p_2^{(2)}=0$. Then the equation (\ref{KorNula})
for $i=1$ implies that $q_2^{(k)}\in\{1,2\}$ for $k=1,2$. By (xi)
we must have  $q_2^{(1)}=q_2^{(2)}=1$. Hence $S^{(1)}=S^{(2)}$ in 
that case. 

Thus from now on we may assume that $n>3$.

(b): If $n$ is even then $s=0$. 

By (i), $p_1^{(1)}=p_1^{(2)}=1$. Note that the first quads of
$(A;A)$ in $S$ and in $\al S$ have different symmetry types
for any $S\in\pE$. As the quad $h(1)$ is symmetric, the equality 
$\al^s hS^{(1)}=S^{(2)}$ forces $s$ to be 0. 

As an immediate consequence of (b), we point out that, if $n$ is 
even, a quad $p_i^{(1)}$ is symmetric iff $p_i^{(2)}$ is,
and the same is true for the quads $q_i^{(1)}$ and $q_i^{(2)}$.

(c): $p_2^{(1)}=p_2^{(2)}$. 

We first observe that $p_2^{(1)}$ and $p_2^{(2)}$ have the same 
symmetry type. If $n$ is even this follows from (b) since then
$g=h$. If $n$ is odd then under the quad action on $p_2$,
each of $\al$, $\nu$, $\rho$ preserves the symmetry type of
$p_2$. Now the assertion (c) follows from (ii) and (iii) if
$p_1^{(1)}$ and $p_2^{(1)}$ have different symmetry types,
and from (v) otherwise.

We shall now prove that $A^{(1)}=A^{(2)}$. 

Assume first that $n$ is even. Then $p_1^{(1)}=p_1^{(2)}=1$ by (i), 
$s=0$ by (b), and the equality $h_1(p_1^{(1)})=p_1^{(2)}$ implies that $h_1(1)=1$. Thus $h_1\in\{1,\rho\}$. Let $i$ be the smallest 
index (if any) such that the quad $p_i^{(1)}$ is skew. Then
$p_i^{(1)}=p_i^{(2)}=6$ by (iii). Hence $h_1(6)=6$ and so 
$h_1=1$ and $A^{(1)}=A^{(2)}$ follows. On the other hand, if all
quads $p_i^{(1)}$ are symmetric, then all these quads are
fixed by $h_1$ and so $A^{(1)}=A^{(2)}$.

Next assume that $n$ is odd. Then $p_1^{(1)}=p_2^{(1)}\in\{1,6\}$
by (i). Let $i<m$ be the smallest index (if any) such that 
the quads $p_i^{(1)}$ and $p_{i+1}^{(1)}$ have the same 
symmetry type.

We first consider the case $p_1^{(1)}=1$. Since $n$ is odd
$\al$ fixes the quad $p_1$, and so $h_1$ must fix the quad 1.
Thus we again have $h_1\in\{1,\rho\}$. 

If $i$ is even then, by minimality 
of $i$, both $p_i^{(1)}$ and $p_{i+1}^{(1)}$ are skew. By (v) 
we have $p_{i+1}^{(1)}=p_{i+1}^{(2)}=6$. Since $i$ is even,
$\al$ fixes $p_{i+1}$ and so we must have $h_1(6)=6$. It follows
that $h_1=1$. As $i>1$, the quad $p_2^{(1)}$ is skew and
by (iii) we have $p_2^{(1)}=p_2^{(2)}=6$. Since $\al$ maps 
$p_2$ to its negative, we must have $s=0$. Consequently, 
$A^{(1)}=A^{(2)}$. 

If $i$ is odd then both $p_i^{(1)}$ and $p_{i+1}^{(1)}$ are 
symmetric. By (v) we have $p_{i+1}^{(1)}=p_{i+1}^{(2)}=1$. 
Since $i$ is odd, $\al$ maps $p_{i+1}$ to its negative. 
Since $\rho$ fixes the symmetric quads, we conclude that
$1=g(1)=\al^s h_1(1)=\al^s(1)$ and so $s=0$. If all quads 
$p_i^{(1)}$ are symmetric, then they are all fixed by $g$
and so $A^{(1)}=A^{(2)}$. Otherwise let $j$ be the smallest 
index such that $p_j^{(1)}$ is skew. By (iii) we have
$p_j^{(1)}=p_j^{(2)}=6$, and $6=p_j^{(2)}=g(p_j^{(1)})=g(6)=h_1(6)$ implies that $h_1=1$. Thus $A^{(1)}=A^{(2)}$. 

We now consider the case $p_1^{(1)}=6$. Since $n$ is odd
$\al$ fixes the quad $p_1$, and so $h_1$ must fix the quad 6.
Thus we have $h_1\in\{1,\nu\rho\}$. 

If $i$ is even then, by minimality 
of $i$, both $p_i^{(1)}$ and $p_{i+1}^{(1)}$ are symmetric. By (v) 
we have $p_{i+1}^{(1)}=p_{i+1}^{(2)}=1$. Since $i$ is even,
$\al$ fixes $p_{i+1}$ and so we must have $h_1(1)=1$. 
It follows that $h_1=1$. As $i>1$, the quad $p_2^{(1)}$ is 
symmetric and by (ii) we have $p_2^{(1)}=p_2^{(2)}=1$. Since 
$\al$ maps $p_2$ to its negative, we must have $s=0$. Consequently, 
$A^{(1)}=A^{(2)}$. 

If $i$ is odd then both $p_i^{(1)}$ and $p_{i+1}^{(1)}$ are 
skew. By (v) we have $p_{i+1}^{(1)}=p_{i+1}^{(2)}=6$. 
Since $i$ is odd, $\al$ maps $p_{i+1}$ to its negative. 
Since $\nu\rho$ fixes the skew quads, we conclude that
$6=g(6)=\al^s h_1(6)=\al^s(6)$ and so $s=0$. If all quads 
$p_i^{(1)}$, $i\le m$, are skew, then they are all fixed by $g$
and $p_{m+1}^{(1)}=p_{m+1}^{(2)}=0$ by (iv). Now 
$0=p_{m+1}^{(2)}=h_1(p_{m+1}^{(1)})=h_1(0)$ entails that
$h_1=1$ and so $A^{(1)}=A^{(2)}$. Otherwise let $j$ be the smallest 
index such that $p_j^{(1)}$ is symmetric. By (ii) we have
$p_j^{(1)}=p_j^{(2)}=1$, and $1=p_j^{(2)}=g(p_j^{(1)})=h_1(1)$ implies that $h_1=1$. Thus $A^{(1)}=A^{(2)}$. 

It remains to consider the case where any two consecutive 
quads $p_i^{(1)}$ and $p_{i+1}^{(1)}$, $i<m$, have different 
symmetry types. Say, the quads $p_i^{(1)}$, $i\le m$, 
are skew for even $i$ and symmetric for odd $i$. By (i) 
and (iii) we have $p_1^{(1)}=p_1^{(2)}=1$ and 
$p_2^{(1)}=p_2^{(2)}=6$. Then $h_1$ must fix the quad 1,
and so $h_1\in\{1,\rho\}$. Since 
$6=p_2^{(2)}=g(p_1^{(2)})=g(6)=\al^s h_1(6)$, we must have
$s=0$ and $h_1=1$ or $s=1$ and $h_1=\rho$. In the former 
case we obviously have $A^{(1)}=A^{(2)}$. In the latter case 
all quads $p_i^{(1)}$, $i\le m$, are fixed by $g$. Moreover,
if $m$ is even also the central column $p_{m+1}$
is fixed by $g$ and so $A^{(1)}=A^{(2)}$. On the other hand,
if $m$ is odd, then the quad $p_m^{(1)}$ is symmetric and
the second part of the condition (v) implies that
$p_{m+1}^{(1)}=p_{m+1}^{(2)}=0$. Hence again 
$A^{(1)}=A^{(2)}$. 

Similar proof can be used if the quads $p_i^{(1)}$, $i\le m$, 
are symmetric for even $i$ and skew for odd $i$. 
This completes the proof of the equality $A^{(1)}=A^{(2)}$. 
The proof of the equality $(C^{(1)};D^{(1)})=(C^{(2)};D^{(2)})$ 
is the same as in \cite{DZ3}.
\end{proof}

\section{Representatives of the equivalence classes} \label{Tablice}

We have computed a set of representatives for the equivalence
classes of normal sequences $NS(n)$ for all $n\le40$.  
Each representative is given in the canonical form
which is made compact by using our standard encoding.
The encoding is explained in detail in section \ref{NorNiz}. 
This compact notation is used primarily in order to save space, 
but also to avoid introducing errors during decoding. For each 
$n$, the representatives are listed in the lexicographic order 
of the symbol sequences (\ref{simb-p}) and (\ref{simb-q}).

In Table 2 and 3 we list the codes for the representatives of the
equivalence classes of $NS(n)$ for $n\le15$ and $16\le n\le29$, respectively. As there are 516 and 304 equivalence classes in 
$NS(32)$ and $NS(40)$ respectively, we list in Table 4 only 
the 36 representatives of the sporadic classes of $NS(32)$. 
The cases $$n=6,14,17,21,\ldots,24,27,28,30,31,33,34,\ldots,39$$ 
are omitted since then $NS(n)=\emptyset$. We also omit $n=40$ 
because in that case there are no sporadic classes.
The Golay type equivalence classes of normal sequences can be easily enumerated (as explained in section \ref{Ekv}) by using the tables 
of representatives of the equivalence classes of Golay 
sequences \cite{DZ1}.

\begin{center}
\begin{tabular}{rlrlrl}
\multicolumn{6}{c}{Table 2: Class representatives for $n\le15$} \\ \hline 
\multicolumn{6}{c}{$n=1$} \\
1 & 0 0 & &&&\\  \hline
\multicolumn{6}{c}{$n=2$} \\
1 & 6 1 & &&&\\  \hline
\multicolumn{6}{c}{$n=3$} \\
1 & 60 11 & &&&\\  \hline
\multicolumn{6}{c}{$n=4$} \\
1 & 16 61 & &&&\\  \hline
\multicolumn{6}{c}{$n=5$} \\
1 & 160 640 & &&&\\  \hline
\multicolumn{6}{c}{$n=7$} \\
1 & 1660 6122 & 2 & 6113 1623 & 3 & 6160 1262 \\
4 & 6163 1261 & &&& \\  \hline
\multicolumn{6}{c}{$n=8$} \\
1 & 1163 6618 & 2 & 1613 6168 & 3 & 1613 6443 \\
4 & 1638 6116 & 5 & 1661 6183 & 6 & 1686 6131 \\
7 & 1866 6311 & &&&\\  \hline
\multicolumn{6}{c}{$n=9$} \\
1 & 16133 64140 & 2 & 16163 64150 & 3 & 61180 16640 \\ \hline
\multicolumn{6}{c}{$n=10$} \\
1 & 11863 66311 & 2 & 16166 64156 & 3 & 16613 61838 \\
4 & 16616 61831 & 5 & 18863 63311 & &\\ \hline
\multicolumn{6}{c}{$n=11$} \\
1 & 611680 164231 & 2 & 616163 126232 && \\  \hline
\multicolumn{6}{c}{$n=12$} \\
1 & 161383 641261 & 2 & 163868 612243 & 3 & 186338 631422 \\
4 & 186631 631422 & &&&\\  \hline
\multicolumn{6}{c}{$n=13$} \\
1 & 1616133 6414853 & 2 & 6116680 1286320 & 3 & 6168160 1613441 \\ 
\hline
\multicolumn{6}{c}{$n=15$} \\
1 & 61613163 12676761 & 2 & 61683860 12626262 & &\\  \hline
\end{tabular} \\
\end{center}

Note that in the case $n=1$ there are no quads and both zeros 
in Table 2 represent central columns.

\begin{center}
\begin{tabular}{rlrl}
\multicolumn{4}{c}{Table 3: Class representatives for $16\le n\le29$} \\ \hline 
\multicolumn{4}{c}{$n=16$} \\
1 & 11186366 66631811 & 2 & 11186636 66631181 \\
3 & 11631866 66186311 & 4 & 11633381 66181163 \\
5 & 11636618 66188836 & 6 & 11638133 66183688 \\
7 & 11661836 66116381 & 8 & 11663681 66111863 \\
9 & 11666318 66118136 & 10 & 11668163 66113618 \\
11 & 11816333 66361888 & 12 & 11816663 66361118 \\
13 & 16131686 61686131 & 14 & 16133831 61681613 \\
15 & 16136168 61688386 & 16 & 16138313 61683868 \\
17 & 16161386 61616831 & 18 & 16163861 61611683 \\
19 & 16163861 64124328 & 20 & 16166138 61618316 \\
21 & 16166138 64127156 & 22 & 16168613 61613168 \\
23 & 16381331 61166813 & 24 & 16381661 61166183 \\
25 & 16388338 61163816 & 26 & 16388668 61163186 \\
27 & 16611368 61836886 & 28 & 16611638 61836116 \\
29 & 16618361 61833883 & 30 & 16618631 61833113 \\
31 & 16831313 61386868 & 32 & 16833838 61381616 \\
33 & 16836161 61384242 & 34 & 16836161 61388383 \\
35 & 16838686 61383131 & 36 & 16838863 61344313 \\
37 & 16861613 61316168 & 38 & 16863868 61311686 \\
39 & 16866131 61318313 & 40 & 16868386 61313831 \\
41 & 18116333 63661888 & 42 & 18116663 63661118 \\
43 & 18631133 63186688 & 44 & 18633388 63181166 \\
45 & 18636611 63188833 & 46 & 18638866 63183311 \\
47 & 18661163 63116618 & 48 & 18663688 63111866 \\
49 & 18666311 63118133 & 50 & 18668836 63113381 \\
51 & 18886366 63331811 & 52 & 18886636 63331181 \\ \hline
\multicolumn{4}{c}{$n=18$} \\
1 & 161633881 641242146 & &\\ \hline
\multicolumn{4}{c}{$n=19$} \\
1 & 1168186360 6643551210 & &\\ \hline
\multicolumn{4}{c}{$n=20$} \\
1 & 1166131836 6611686381 & 2 & 1166861836 6611316381 \\
3 & 1181616633 6636161188 & 4 & 1186161633 6631616188 \\
5 & 1186868366 6631313811 & 6 & 1188686366 6633131811 \\
7 & 1611663138 6441827614 & 8 & 1613383113 6168161368 \\
9 & 1613383186 6168161331 & 10 & 1616138631 6164224786 \\
11 & 1616311386 6161866831 & 12 & 1616681386 6161136831 \\
13 & 1616831361 6161386883 & 14 & 1616833886 6161381631 \\
15 & 1616836113 6161388368 & 16 & 1616838638 6161383116 \\
\hline
\end{tabular} \\
\end{center}

\begin{center}
\begin{tabular}{rlrl}
\multicolumn{4}{c}{Table 3: (continued)} \\ \hline 
\multicolumn{4}{c}{$n=20$} \\
17 & 1638133138 6116681316 & 18 & 1638133161 6116681383 \\
19 & 1638883818 6183331633 & 20 & 1661813881 6116361666 \\
21 & 1661863138 6183311316 & 22 & 1661863161 6183311383 \\
23 & 1683381313 6138836868 & 24 & 1683611313 6138166868 \\
25 & 1683831361 6138386883 & 26 & 1683833886 6138381631 \\
27 & 1683836113 6138388368 & 28 & 1683838638 6138383116 \\
29 & 1686613113 6131831368 & 30 & 1686613186 6131831331 \\
31 & 1863161133 6318616688 & 32 & 1863831133 6318386688 \\
33 & 1881616663 6336161118 & 34 & 1886161663 6331616118 \\
35 & 1886868336 6331313881 & 36 & 1888686336 6333131881 \\ \hline
\multicolumn{4}{c}{$n=25$} \\
1 & 1616138313163 & & 6414148485143 \\
2 & 1616161383163 & & 6414148584143 \\
3 & 1616161386163 & & 6414148585143 \\
4 & 1616168613163 & & 6414158585143 \\ \hline
\multicolumn{4}{c}{$n=29$} \\
1 & 161383131316830 & & 641414841515843 \\
2 & 161686161313860 & & 641515851514853 \\
\hline
\end{tabular} \\
\end{center}

\begin{center}
\begin{tabular}{rl}
\multicolumn{2}{c}{Table 4: Sporadic classes for $n=32$} \\ \hline 
1 & 1111636366331881 6666181845542277 \\
2 & 1111663318816363 6666455411882727 \\
3 & 1166186333886318 6641231814721176 \\
4 & 1166186366113681 6641231858635567 \\
5 & 1166813633883681 6614328141271167 \\
6 & 1166813666116318 6614328185365576 \\
7 & 1613161361683831 6168616842525747 \\
8 & 1616168313861313 6412651765826487 \\
9 & 1616168338613838 6412623728284126 \\
10 & 1616168361386161 6412623756567358 \\
11 & 1616383883163861 6412214634822843 \\
12 & 1616386113133168 6412434384672376 \\
13 & 1616386186866831 6412282832157623 \\
14 & 1616613813136831 6412565684677623 \\
15 & 1616613886863168 6412717132152376 \\
16 & 1616616116833861 6412785365172843 \\
17 & 1616831613868686 6412348265823512 \\
18 & 1616831638616161 6412376243437358 \\
19 & 1616831661383838 6412376271714126 \\
20 & 1638163886681331 6142241631477413 \\
21 & 1638163886681331 6241142632488423 \\
22 & 1661166113688631 6142758368527413 \\
23 & 1661166113688631 6241857367518423 \\
24 & 1683161638383861 6138642142161717 \\
25 & 1683161661616138 6138642183575656 \\
26 & 1683383813863131 6138421671711253 \\
27 & 1683383886136868 6138164234348746 \\
28 & 1683616113866868 6138428321218256 \\
29 & 1683616186133131 6138834235351743 \\
30 & 1683838338386138 6138342816574646 \\
31 & 1683838361613861 6138342842831212 \\
32 & 1686168638686131 6131613142475752 \\
33 & 1818633611886666 6363445518812222 \\
34 & 1818666636638811 6363111144552772 \\
35 & 1863116636816611 6341268841334537 \\
36 & 1863116663183388 6341268814221826 \\ \hline
\end{tabular} \\
\end{center}

\section{Acknowledgments}

The author is grateful to NSERC for the 
continuing support of his research. This work was made possible by 
the facilities 
of the Shared Hierarchical Academic Research Computing Network 
(SHARCNET:www.sharcnet.ca) and Compute/Calcul Canada.

\end{document}